\newtheorem{theorem}{Theorem}[section]
\newtheorem{lemma}[theorem]{Lemma}
\newtheorem{remark}[theorem]{Remark}
\newtheorem{proposition}[theorem]{Proposition}
\newtheorem{conjecture}[theorem]{Conjecture}
\DeclareMathOperator{\NBF}{\mathcal{NBF}}
\DeclareMathOperator{\NBP}{\mathcal{NBP}}
\DeclareMathOperator{\NBL}{\mathcal{NBL}}
\DeclareMathOperator{\card}{card}
\begin{document}

\title[Characterizing continuity for polynomials]{Bounded and unbounded polynomials and multilinear forms: Characterizing continuity}

\author[J. L. G\'{a}mez-Merino et al]{Jos\'{e} L. G\'{a}mez-Merino \and Gustavo A. Mu\~{n}oz-Fern\'{a}ndez \and Daniel Pellegrino \and Juan B.~Seoane-Sep\'{u}lveda}

\address{Departamento de An\'{a}lisis Matem\'{a}tico,\newline\indent Facultad de Ciencias Matem\'{a}ticas, \newline\indent Plaza de Ciencias 3, \newline\indent Universidad Complutense de Madrid,\newline\indent Madrid, 28040, Spain.}
\email{jlgamez@mat.ucm.es \newline \phantom{E-mail address: }gustavo\_fernandez@mat.ucm.es \newline \phantom{E-mail address: }jseoane@mat.ucm.es}

\address{Departamento de Matem\'{a}tica, \newline\indent Universidade Federal da Para\'{\i}ba, \newline\indent 58.051-900 - Jo\~{a}o Pessoa, Brazil.} \email{dmpellegrino@gmail.com}

\thanks{J.L. G\'{a}mez-Merino, G. A. Mu\~{n}oz-Fern\'{a}ndez and J. B. Seoane-Sep\'{u}lveda were supported by the Spanish Ministry
of Science and Innovation, grant MTM2009-07848.}

\begin{abstract}
In this paper we prove a characterization of continuity for polynomials on a normed space. Namely, we prove that a polynomial is continuous if and only if it maps compact sets into compact sets. We also provide a partial answer to the question as to whether a polynomial is continuous if and only if it transforms  connected sets into connected sets. These results motivate the natural question as to how many non-continuous polynomials there are on an infinite dimensional normed space. A problem on the \emph{lineability} of the sets of non-continuous polynomials and multilinear mappings on infinite dimensional normed spaces is answered.
\end{abstract}

\subjclass[2010]{46E10, 46G25.}
\keywords{lineability, continuous polynomial, non-continuous polynomial.}

\maketitle

\section{Introduction and notation}

It is well-known (see \cite[Theorem 2]{Velleman}) that a mapping
$f:{\mathbb R}\rightarrow {\mathbb R}$ is continuous if and only
if it satisfies the following two conditions:

\begin{enumerate}
\item $f$ maps compact sets into compact sets.\label{cond:1}
\item $f$ maps connected sets into connected sets.\label{cond:2}
\end{enumerate}

At the other end of the scale, it is possible to construct
$2^{\mathfrak c}$-dimensional spaces of everywhere discontinuous
functions in ${\mathbb R}^{\mathbb R}$ satisfying only
one of the above conditions (see \cite{GMS}). However, the same
situation does not hold for the case of polynomials on a normed
space. Actually, condition \eqref{cond:1} characterizes the continuity of a polynomial on a
normed space, which is proved in Section \ref{Sec:2}. As we will also see in Section 2, we study when condition \eqref{cond:2} above characterizes continuity for polynomials on normed spaces, problem which will be solved partly.

Finally, Section \ref{Sec:3} is devoted to the construction of
linear spaces of maximal dimension of non-bounded polynomials
between normed spaces.

For convenience we recall the basic definitions and standard results needed to discuss
polynomials on normed spaces. A map $P:E\rightarrow F$ is an
{\em $n$-homogeneous polynomial} if there is a
symmetric $n$-linear mapping $L:E^n \rightarrow F$ for which
$P(x)=L(x,\ldots,x)$ for all $x\in E$. In this case it is
convenient to write $P={\widehat L}$.

We let ${\mathcal P}_a(^{n}E;F)$, ${\mathcal L}_a(^{n}E;F)$ and
${\mathcal L}_a^{s}(^{n}E;F)$ denote respectively the linear spaces
of all $n$-homogeneous polynomials from $E$ into $F$, the
$n$-linear mappings from $E$ into $F$ and the
symmetric $n$-linear mappings from $E$ into $F$. More generally, a map
$P:E\rightarrow F$ is a {\em polynomial of degree at most $n$} if
    $$
    P=P_0 +P_1 +\cdots +P_n,
    $$
where $P_k \in {\mathcal
P}_a(^{k}E;F)$ $(1\leq k\leq n)$, and $P_0:E\rightarrow F$ is a
constant function. The polynomials of degree at most $n$ between the normed spaces $E$ and $F$ are denoted by ${\mathcal P}_{n,a}(E;F)$.

Polynomials on a finite dimensional normed space are always continuous; however, the same thing does not happen for infinite dimensional normed spaces. Boundedness is a characteristic property of continuous polynomials on a normed space. In particular, $P\in{\mathcal P}_{n,a}(E;F)$ is continuous if and only if $P$ is bounded on the unit ball of $E$ (denoted by ${\mathsf B}_E$). This is standard and particularly well-known for homogeneous polynomials (see for instance \cite[Proposition 1.11]{Dineen}). For the non-homogeneous case, a complexification procedure lets us focus our attention on polynomials defined on a complex normed space. Let $P$ be a polynomial of degree at most $n$ on the complex normed space $E$. We define the homogenization of $P$ by
    $$
    Q(x,\lambda)=\begin{cases}
    \lambda^n P\left(\frac{x}{\lambda}\right)&\text{if $\lambda\ne 0$,}\\
    0&\text{if $\lambda=0$,}
    \end{cases}
    $$
for every $(x,\lambda)\in E\oplus {\mathbb C}$. It is a simple exercise to prove that $Q$ is a homogeneous polynomial on $E\oplus {\mathbb C}$. Let $E\oplus_\infty{\mathbb C}$ stand for $E\oplus{\mathbb C}$ endowed with the norm $\|(x,\lambda)\|_\infty=\max\{\|x\|,|\lambda|\}$. Now if $P$ is bounded on ${\mathsf B}_E$, by the Maximum Modulus Principle
    \begin{align*}
    \sup\{\|Q(x,\lambda)\|:\|(x,\lambda)\|_\infty\leq 1\}&=
    \sup\left\{\left\|\lambda^n P\left(\frac{x}{\lambda}\right)\right\|:\|x\|\leq 1,\ |\lambda|\leq 1\right\}\\
    &=\sup\left\{\left\| P\left(\frac{x}{\lambda}\right)\right\|:\|x\|\leq 1,\ |\lambda|= 1\right\}\\
    &=\sup\left\{\left\| P(x)\right\|:\|x\|\leq 1\right\}.
    \end{align*}
Hence $Q$ is bounded on $E\oplus_\infty{\mathbb C}$, and therefore continuous. This implies that $P$ is also continuous since $P$ is a restriction of $Q$. Conversely, if $P$ is continuous, $Q$ is clearly continuous for all $(x,\lambda)\in E\otimes_\infty {\mathbb C}$ with $\lambda\ne 0$. Thus $Q$ is continuous in $E\otimes_\infty {\mathbb C}$ (see again \cite[Proposition 1.11]{Dineen}) and bounded in ${\mathsf B}_{E\otimes_\infty {\mathbb C}}$. Therefore $P$ must be bounded too in ${\mathsf B}_E$.

If $P:E\rightarrow F$ and $L:E^n \rightarrow F$ are, respectively, a continuous polynomial of degree at most $n$ and a continuous $n$-linear mapping we define
    \begin{align*}
    \|P\|&=\sup\{\|P(x)\|:\ \|x\|\leq 1\},\\
    \|L\|&=\sup  \{\|L(x_1,\ldots,x_n)\|:\ \|x_1\|\leq 1,\ldots , \|x_n\|\leq 1\}.
    \end{align*}

We let ${\mathcal P}(^{n}E;F)$, ${\mathcal P}_n(E;F)$, ${\mathcal L}(^{n}E;F)$ and
${\mathcal L}^{s}(^{n}E;F)$ denote, respectively, the normed spaces
of  the continuous $n$-homogeneous polynomials from $E$ into $F$, the continuous polynomials of degree at most $n$ from $E$ into $F$, the
continuous $n$-linear mappings from $E$ into $F$, and the
continuous symmetric $n$-linear mappings from $E$ into $F$.

In general the results on the continuity of scalar-valued polynomials and multilinear forms can be easily extended to vector-valued polynomials and multilinear mappings.
If ${\mathbb K}$ is the real or complex field we use the notations
${\mathcal P}(^{n}E)$, ${\mathcal P}_n(E)$, ${\mathcal L}(^{n}E)$ and ${\mathcal
L}^{s}(^{n}E)$ in place of ${\mathcal P}(^{n}E;{\mathbb K})$, ${\mathcal P}_n(E;{\mathbb K})$,
${\mathcal L}(^{n}E;{\mathbb K})$, and ${\mathcal
L}^{s}(^{n}E;{\mathbb K})$ respectively.

\section{A characterization of continuity for polynomials}\label{Sec:2}

In this section we will consider both conditions \eqref{cond:1} and \eqref{cond:2} given in the
Introduction, in the frame of polynomials on normed spaces. Let us begin with proving that, actually,
condition \eqref{cond:1} characterizes the continuity of polynomials on any normed space.

\begin{theorem}\label{char1}
If $E$ is a normed space and $P$ is a polynomial  on $E$
then $P$ is continuous if and only it transforms compact sets into
compact sets.
\end{theorem}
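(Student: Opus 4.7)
The forward direction is routine: a continuous image of a compact set is compact, so no work is needed there.

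For the nontrivial direction, I would argue by contrapositive. Assume $P$ is not continuous; the goal is to produce a compact set $K \subset E$ such that $P(K)$ is not compact. The crucial preliminary observation is that, for a polynomial on a normed space, continuity is equivalent to being bounded on \emph{every} neighbourhood of the origin. One implication is immediate from continuity at $0$. The other rests on the fact (used in the paper's discussion before the theorem) that continuity is equivalent to boundedness on $\mathsf{B}_E$: if $P$ were bounded by $M$ on some ball $B(0,\delta)$, then for each $y \in \mathsf{B}_E$ the one-variable polynomial $s \mapsto P(sy) = \sum_{k=0}^{d} s^k P_k(y)$ is bounded by $M$ on $|s|\le \delta$, so Cauchy-type estimates give $\|P_k(y)\| \le M/\delta^k$, and summing yields a bound of $P$ on $\mathsf{B}_E$.

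Granted this, since $P$ is not continuous it is unbounded on $B(0,1/n)$ for every $n \ge 1$, so I can choose
$$
y_n \in B(0,1/n) \quad \text{with} \quad \|P(y_n)\| > n.
$$
Then $y_n \to 0$, so
$$
K = \{y_n : n \ge 1\} \cup \{0\}
$$
is a compact subset of $E$. However $P(K) \supseteq \{P(y_n): n\ge 1\}$ is unbounded in $F$, hence not compact. This contradicts the hypothesis that $P$ maps compact sets to compact sets, completing the contrapositive.

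The only step that is not entirely routine is the preliminary equivalence between continuity of $P$ and boundedness of $P$ on an arbitrary neighbourhood of $0$; this is where one either invokes the Cauchy estimate argument above, or, equivalently, uses the homogenization $Q$ of $P$ (as set up in the paragraph preceding the theorem) together with the fact that bounded $n$-homogeneous polynomials are characterized by boundedness on any ball. Once that equivalence is in hand, the rest of the argument is essentially the same as in the classical one-variable case: a discontinuous polynomial must blow up along a null sequence, and adjoining the limit gives a compact set whose image fails to be bounded.
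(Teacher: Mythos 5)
Your proof is correct, but it takes a genuinely different route from the paper's. You funnel all the polynomial-specific work into one lemma --- a polynomial bounded on some ball about the origin is continuous --- proved via coefficient estimates on the one-variable restrictions $s\mapsto P(sy)$ (or via the homogenization $Q$); granted that, a discontinuous polynomial is unbounded on every ball $B(0,1/n)$, and the compact set $\{y_n\}\cup\{0\}$ has an \emph{unbounded}, hence non-compact, image. The paper does not pass through unboundedness at all: it takes an arbitrary null sequence $(x_k)$ witnessing discontinuity at $0$, uses the compactness of $P(\{x_k\}\cup\{0\})$ to extract a subsequence with $P(x_k)\to a\ne P(0)$ --- where the values $P(x_k)$ may well remain bounded --- and then exhibits a compact set whose image fails to be \emph{closed} because it omits the limit point $a$: either directly, by deleting $P^{-1}(a)$ from the set when $P$ is injective on a subsequence, or after nudging each $x_k$ along the ray $\lambda x_k$ (using only the continuity of the one-variable polynomials $\lambda\mapsto P(\lambda x_k)$) to force such injectivity. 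Your argument is shorter and isolates the analytic input cleanly; the paper's is longer but needs nothing beyond one-variable polynomial continuity plus the translation trick, and so stays closer to the elementary spirit of Velleman's one-variable result. Two minor points on your write-up: the equivalence should be stated as ``bounded on \emph{some} ball about the origin'' rather than ``on every neighbourhood'' (a continuous polynomial is of course unbounded on $E$ itself, which is a neighbourhood of $0$), and in the real case the coefficient bound is $\|P_k(y)\|\le C_d M/\delta^k$ with a constant $C_d$ depending on the degree (from Lagrange interpolation or Chebyshev-type estimates) rather than the exact Cauchy bound $M/\delta^k$; neither affects the validity of the argument.
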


\begin{proof}
All continuous functions between topological spaces map compact sets into compact sets, so we just need to
prove that if $P$ maps compact sets into compact sets, then $P$ is
continuous. Actually, we only need to show that all polynomials mapping compact sets in compact sets are continuous
at $0$. If we prove that and $x_0\in E$ is arbitrary, then the
polynomial defined by $Q(x)=P(x+x_0)$ for all $x\in E$ also maps
compact sets into compact sets. Being $Q$ continuous at $0$, we
would also have that $P$ is continuous at $x_0$. Actually, a more general statement can be proved: a polynomial is continuous if and only if it is continuous at $0$.

Let us prove then that $P$ is continuous at $0$. Let $(x_k)$ be a
convergent sequence in $E\setminus\{0\}$ to $0$ such that $\lim_{k\rightarrow
\infty}P(x_k)$ does not exist or it is not equal to $P(0)$. Since
the set $C=\{x_k:k\in{\mathbb N}\}\cup\{0\}$ is compact and $P(C)$ is compact too by hypothesis, we can assume without loss of generality that $(P(x_{k}))$ converges to $a\ne P(0)$ and that $P(x_k)\ne P(0)$ for all $k\in{\mathbb N}$.

Observe that only one of the following statements can hold:

\begin{enumerate}
\item  There exists a subsequence $(y_k)$ of $(x_n)$ such that $P$ is injective on $\{y_k:k\in{\mathbb N}\}$.

\item $P(x_k)=a$ for all but a finite number of $k$'s.
\end{enumerate}

For the first case consider the set $C^*=C\setminus P^{-1}(a)$,
which is compact. However $P(C^*)$ is not even closed since it does not contain
its limit point $a$.

For the second case we may assume that $P(x_k)=a$ for
all $k\in{\mathbb N}$. Now suppose $P=P_n+P_{n-1}+\cdots+P_1+P_0$, where
$P_j\in{\mathcal P}_{a}(^jE)$ and $P_0$ is a constant function taking the value $P(0)$.  Then for each $k\in{\mathbb N}$,
$P_j(x_k)$ cannot vanish for every $j=1,\ldots,n$ (otherwise $P(x_k)=P(0)$). Therefore
the one variable polynomial defined by $p_k(\lambda):=P(\lambda x_k)$, for all
$\lambda\in {\mathbb R}$, is not constant, and hence it takes
infinitely many values on every interval. Using the continuity of the
polynomial $p_k$ one can construct a sequence $(\lambda_k)\subset(0,1]$
such that for each $k\in{\mathbb N}$ we have
    $$
    |P(\lambda_kx_k)-P(x_k)|=|p_k(\lambda_k)-p_k(1)|<\frac{1}{k}
    $$
and
    $$
    P(\lambda_kx_k)\notin\{P(0),P(\lambda_1x_1),\ldots,P(\lambda_{k-1}x_{k-1})\}.
    $$
Notice that
    $$
    |P(\lambda_kx_k)-a|\leq |P(\lambda_kx_k)-P(x_k)|+|P(x_k)-a|\longrightarrow 0\quad\text{as $k\rightarrow\infty$.}
    $$
Finally, by letting $y_k=\lambda_k x_k$, we have that $(y_k)\subset E\setminus\{0\}$, $P(y_k)\ne P(0)$, $\lim_{k\rightarrow\infty}y_k=0$, $\lim_{k\rightarrow\infty}P(y_k)=a$, and $P$ is injective over $\{y_k:k\in{\mathbb N}\}$. This leads us to a contradiction as in the first case.
\end{proof}

After checking that condition \eqref{cond:1} from the Introduction characterizes continuity, a natural question arises now:
\begin{center}
\begin{quote}
{\em Is $P\in{\mathcal P}_{n,a}(E)$ continuous if and only if for every connected set $C\in E$, $P(C)$ is also connected for every infinite dimensional normed space $E$?}
\end{quote}
\end{center}

Unfortunately, this general question seems much deeper than it looks at first sight, although we can prove it for the particular case of homogeneous polynomials of degree 1 and 2, as we see next:
\begin{proposition}\label{rem}
Let $P\in{\mathcal P}(^nE)$ with $n=1,2$. Then $P$ is continuous if and only if it transforms connected sets into connected sets.
\end{proposition}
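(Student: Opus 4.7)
The plan is as follows. The implication that continuity forces preservation of connectedness is standard, so I focus on the converse, arguing by contrapositive: if $P \in \mathcal{P}_a(^n E)$ is discontinuous ($n=1$ or $2$), then there is a connected set $C \subseteq E$ whose image $P(C)$ is disconnected.

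For the case $n = 1$, the argument is short. If the linear functional $P$ is discontinuous then $\ker P$ is dense in $E$ (this is the standard fact that a linear functional on a normed space is continuous iff its kernel is closed). Picking any $x_0 \in E$ with $P(x_0) = 1$ and setting $C := \ker P \cup \{x_0\}$, the set $\ker P$ is a subspace (hence connected), and $x_0$ lies in its closure by density, so $C$ is connected. Yet $P(C) = \{0,1\}$ is disconnected, which settles this case.

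For $n = 2$, I would write $P(x) = L(x,x)$ with $L$ the symmetric bilinear form obtained by polarization; then $L$ is discontinuous as well. Using $2$-homogeneity and the unboundedness of $P$ on $\mathsf{B}_E$, a rescaling produces a sequence $v_k \to 0$ with $P(v_k) = a$ for some fixed $a \neq 0$. On the segment $[v_k, v_{k+1}]$ one computes
\[
P((1-t)v_k + t v_{k+1}) \;=\; a - 2 t (1-t)\bigl(a - L(v_k, v_{k+1})\bigr),
\]
whose image is the interval between $a$ and $(a + L(v_k, v_{k+1}))/2$. If the $v_k$ can be selected so that $L(v_k, v_{k+1})$ stays uniformly bounded and bounded away from $-a$, then the images of all these segments lie in a single interval $I$ not containing $0$. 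The polygonal set $C := \{0\} \cup \bigcup_k [v_k, v_{k+1}]$ is then connected (path-connected along the segments, with $0$ in its closure because $v_k \to 0$), while $P(C) \subseteq I \cup \{0\}$ is disconnected.

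The main obstacle is constructing such a sequence $v_k$. Inductively, after $v_k$ is chosen, I would look for $v_{k+1}$ near $0$ satisfying $P(v_{k+1}) = a$ inside the hyperplane $H_k := \ker L(v_k, \cdot)$, which would force $L(v_k, v_{k+1}) = 0$. Whether this is possible hinges on whether $P|_{H_k}$ is discontinuous (so that the level set $\{P = a\}$ contains points of $H_k$ arbitrarily near $0$), which in turn depends on the continuity of the linear form $L(v_k, \cdot)$. As a degenerate but helpful side case, whenever $L(\cdot, y)$ fails to be continuous for some $y$, the $n = 1$ construction applied to that linear functional already produces a bad connected set. A cleaner alternative works when $P$ is sign-definite: taking $C := P^{-1}(a) \cup \{0\}$ suffices, since $P^{-1}(a)$ is then an infinite-dimensional ellipsoid that is path-connected and has $0$ in its closure, so $P(C) = \{0, a\}$ at once. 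Extending the argument uniformly across indefinite $P$ is the delicate step, and is presumably why this result is recorded only for $n \le 2$.
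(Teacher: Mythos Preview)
Your treatment of $n=1$ is correct and in fact cleaner than the paper's: the paper builds the same polygonal set $C=\{0\}\cup\bigcup_k[x_k,x_{k+1}]$ for both $n=1$ and $n=2$, whereas your dense-kernel argument disposes of the linear case in two lines.

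For $n=2$, however, there is a genuine gap, and you identify it yourself: you never construct the sequence $(v_k)$ in the indefinite case, and the inductive scheme you sketch (seeking $v_{k+1}$ inside $\ker L(v_k,\cdot)$) can break down whenever $L(v_k,\cdot)$ happens to be continuous for the $v_k$ you have already committed to. The paper sidesteps this with a much simpler device. Rather than forcing $P(v_k)$ to be constant, take any sequence $x_k\to 0$ with $P(x_k)$ strictly increasing to $+\infty$ and $P(x_1)>0$ (available from unboundedness and $2$-homogeneity). Then replace each $x_k$ by $\pm x_k$, choosing the signs inductively so that $L(x_k,x_{k+1})\ge 0$ for every $k$; this costs nothing because $P(-x_k)=P(x_k)$. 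On each segment one now has
\[
P\bigl(\lambda x_k+(1-\lambda)x_{k+1}\bigr)=\lambda^{2}P(x_k)+2\lambda(1-\lambda)L(x_k,x_{k+1})+(1-\lambda)^{2}P(x_{k+1})\ \ge\ \tfrac{1}{2}\,P(x_1)>0,
\]
so with $C=\{0\}\cup\bigcup_k[x_k,x_{k+1}]$ the image $P(C)$ sits inside $\{0\}\cup[\tfrac{1}{2}P(x_1),\infty)$ and is disconnected. The sign-flip is the entire idea; it dissolves the definite/indefinite dichotomy you were struggling with and requires no control over $L(x_k,x_{k+1})$ beyond its sign.
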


\begin{proof}
If $P$ is continuous, it obviously transforms connected sets into
connected sets. Now suppose $P$ is not continuous. Then there
exists a sequence of non null vectors $\{x_n\}$ such that
$\lim_{k}x_k=0$ but $\lim_k P(x_k)=\infty$. We can also choose the
$x_k$'s so that $\{P(x_k)\}$ is an increasing sequence and
$P(x_1)>0$.

Now consider the connected set
$C=\left(\bigcup_{k=1}^\infty [x_k,x_{k+1}]\right)\cup \{0\}$,
where $[x_k,x_{k+1}]$ is the segment with endpoints $x_k$ and
$x_{k+1}$ for every $k\in{\mathbb N}$. If $n=1$, by linearity $P([x_k,x_{k+1}])=[P(x_k),P(x_{k+1})]$ for all $k\in{\mathbb N}$. Hence $P(C)=[P(x_1),\infty)\cup\{0\}$ and since $P(x_1)>0$, $P(C)$ is not connected. Furthermore, if $n=2$ and $L\in{\mathcal L}^s(^2E)$ is the polar of $P$, we can assume that $L(x_k,x_{k+1})\geq 0$. Indeed, we
just need to replace $x_k$ by $-x_k$ if necessary. It is important
to notice that $P(x_k)=P(-x_k)$.  Since
    \begin{align*}
    P(\lambda x_n+(1-\lambda)
    x_{k+1})&=\lambda^2P(x_k)+2\lambda(1-\lambda)L(x_k,x_{k+1})+(1-\lambda)^2P(x_{k+1})\\
    &\geq \lambda^2P(x_k)+(1-\lambda)^2P(x_{k+1})\\
    &\geq \left[\lambda^2+(1-\lambda)^2\right]P(x_k)\\
    &\geq P(x_k),
    \end{align*}
for every $\lambda \in[0,1]$, we have that
$P([x_k,x_{k+1}])\subset [P(x_k),\infty)$. This, together with the
fact that $\lim_kP(x_k)=\infty$ imply that
$P(C)=[P(x_1),\infty)\cup\{0\}$. Finally, since $P(x_1)>0$, $P(C)$ is not connected.
\end{proof}

\begin{conjecture}
It is our belief that condition \eqref{cond:2} also characterizes continuity for arbitrary polynomials on any infinite dimensional normed space.
\end{conjecture}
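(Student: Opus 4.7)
The plan is to extend the scheme of Proposition~\ref{rem}: assuming $P$ is discontinuous, build an explicit connected set $C\subset E$ whose image $P(C)$ is disconnected. After subtracting $P(0)$ we may assume $P(0)=0$; replacing $P$ by $-P$ if needed (which preserves both the continuity and the connected-to-connected hypotheses), the failure of continuity yields, via the characterization by boundedness on the ball, a sequence $(x_k)\subset E\setminus\{0\}$ with $x_k\to 0$ and $P(x_k)\to+\infty$. Passing to a subsequence we may assume $P(x_k)$ is strictly increasing with $P(x_1)>0$.

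The candidate set is
\[
C=\{0\}\cup\bigcup_{k=1}^{\infty}[x_k,x_{k+1}],
\]
which is connected because $x_k\to 0$ places $0$ in the closure of the union of segments. Each segment lies in a finite-dimensional subspace, so $J_k:=P([x_k,x_{k+1}])$ is a closed interval $[\alpha_k,\beta_k]$ containing $P(x_k)$ and $P(x_{k+1})$. The aim is to select $(x_k)$ so carefully that $\alpha_k\geq \tfrac12 P(x_k)$ for every $k$; then $P(C)\subset\{0\}\cup[\tfrac12 P(x_1),\infty)$, which is disconnected because the right-hand piece meets $P(C)$ (since $P(x_k)\to\infty$) and is bounded away from $0$.

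The hard part is controlling the one-variable polynomial $\phi_k(\lambda):=P(\lambda x_k+(1-\lambda)x_{k+1})$ on $[0,1]$. Writing $P=P_0+P_1+\cdots+P_n$ with $L_i$ the symmetric polar of $P_i$, one has the Bernstein-type expansion
\[
\phi_k(\lambda)=\sum_{i=0}^{n}\sum_{j=0}^{i}\binom{i}{j}\lambda^{j}(1-\lambda)^{i-j}\,L_i\bigl(\underbrace{x_k,\ldots,x_k}_{j},\underbrace{x_{k+1},\ldots,x_{k+1}}_{i-j}\bigr).
\]
For $n\leq 2$ all coefficients above are non-negative after the sign-flipping trick of Proposition~\ref{rem}, forcing $\phi_k\geq\min\{P(x_k),P(x_{k+1})\}$ automatically. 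For $n\geq 3$ this convexity is lost and $\phi_k$ may dip arbitrarily far below $0$ on $(0,1)$, so the selection of the $x_k$ must be upgraded.

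My proposal is an inductive construction that exploits the infinite-dimensionality of $E$: given $x_1,\ldots,x_k$, choose $x_{k+1}$ with $\|x_{k+1}\|<\|x_k\|/2$, with $P(x_{k+1})\geq 2P(x_k)$, and such that every mixed polar value $L_i(x_k^{\,j},x_{k+1}^{\,i-j})$ is bounded below by $P(x_k)/2$. If this can be arranged, then since the Bernstein basis $\binom{i}{j}\lambda^{j}(1-\lambda)^{i-j}$ sums to $1$ for each fixed~$i$, the displayed formula yields $\phi_k\geq P(x_k)/2$ on $[0,1]$, completing the argument. The principal obstacle is the joint feasibility of these finitely many highly nonlinear constraints together with the smallness of $\|x_{k+1}\|$ and the growth condition on $P(x_{k+1})$; a plausible route is to first pick a candidate in an infinite-dimensional topological complement of $\operatorname{span}\{x_1,\ldots,x_k\}$ on which $P$ remains unbounded near $0$, and then perturb it by small multiples of the already-chosen $x_j$ to adjust the signs of the mixed polar values (those perturbations move the lowest-order coefficients linearly while barely affecting $P(x_{k+1})$ and $\|x_{k+1}\|$). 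Pushing this quantitative selection argument through for all $n+1$ simultaneous Bernstein constraints is, I believe, precisely where the problem turns genuinely delicate, and the reason the authors leave it as a conjecture.
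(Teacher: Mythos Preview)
The statement you are addressing is labeled a \emph{Conjecture} in the paper; the authors do not supply a proof and explicitly say they do not know the answer. So there is no paper proof to compare against, and what you have written is (as you yourself acknowledge in the last paragraph) a heuristic programme rather than a proof.

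Within that programme, the decisive step---the inductive selection of $x_{k+1}$ so that every polar coefficient $L_i(x_k^{\,j},x_{k+1}^{\,i-j})$ exceeds $P(x_k)/2$---is not merely ``delicate'' but, as stated, overconstrained. Your list of inequalities includes the pure terms $j=i$, i.e.\ $L_i(x_k,\ldots,x_k)=P_i(x_k)$, which were already fixed when $x_k$ was chosen and depend in no way on $x_{k+1}$; there is no mechanism for forcing $P_i(x_k)\geq P(x_k)/2$ for every $i=1,\ldots,n$, and in general this fails (think of $P=P_n$, so $P_1=\cdots=P_{n-1}=0$). Even dropping those, the genuinely mixed constraints with $0<j<i$ are homogeneous of positive degree in $x_{k+1}$, so demanding they stay above a fixed positive threshold while $\|x_{k+1}\|\to 0$ forces the corresponding partial maps $y\mapsto L_i(x_k^{\,j},y^{\,i-j})$ to be unbounded near $0$; nothing in the hypothesis guarantees this for each $i,j$ simultaneously, and your perturbation idea only adjusts signs, not magnitudes. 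In short, the Bernstein-coefficient lower bound you aim for is too strong a target, and the sketch does not close the gap that keeps the statement a conjecture.
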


\begin{remark}
Although we do not know the answer to the previous conjecture, we do know that if $L\in{\mathcal L}_a(^nE)$ transforms connected sets in $E^n$ into connected sets, then it is continuous. Indeed, using Proposition \ref{rem} with $n=1$, it is easy to see that $L$ is separately continuous, and hence continuous. 
\end{remark}

\section{Non-bounded multilinear mappings and polynomials}\label{Sec:3}

After learning the characterizations obtained in the previous section (Theorems \ref{char1} and Proposition \ref{rem}), this section is devoted to the relatively new notion of {\em lineability}, which will tie the paper together. This notion of lineability has the following motivation: Take a function with some special or pathological property. Coming up with a concrete example of such a function can be a difficult task. Actually, it may seem that if one succeeds in finding one example of such a function, one might think that there cannot be too many functions of that kind. Probably one cannot even find infinite dimensional vector spaces of such functions. This is, however, exactly what has happened. The search for large algebraic structures of functions with pathological properties has lately become somewhat of a new trend in mathematics. Let us recall that a set $M$ of functions satisfying some pathological property is said to be \emph{lineable} if $M\cup\{0\}$ contains an infinite dimensional vector space.  More specifically, we will say that $M$ is \emph{$\mu$-lineable} if $M\cup\{0\}$ contains a vector space of dimension $\mu$, where $\mu$ is a cardinal number. We refer to the interested reader to \cite{GMS,sierpinski,ACPS,AGPS,AronGurariySeoane,APS,AS,Bernal,BDFP,BDP,BMP,GGMS,MPPS,PS} for recent advances in this theory.

If $E$ is a normed space, in this section $\NBL(^nE)$,  $\NBL^s(^nE)$,
$\NBP(^nE)$ and $\NBP_n(E)$ represent, respectively, the set of non-bounded linear forms on $E$, the set of non-bounded symmetric $n$-linear forms on $E$, the set of non-bounded scalar-valued $n$-homogeneous polynomials on $E$ and the set of non-bounded scalar-valued polynomials on $E$ of degree at most $n$. Our results on the lineability of $\NBL(^nE)$,  $\NBL^s(^nE)$,
$\NBP(^nE)$ and $\NBP_n(E)$ rely on the lineability of the set of non-bounded scalar-valued functions defined on an infinite set $I$, denoted by $\NBF(I)$. The following set-theoretical lemma (see \cite[Lemma 4.1]{AronGurariySeoane}) will be needed for our main result in this section.

\begin{lemma}\label{sets}
If $C_1,\ldots,C_m$ are $m$ arbitrary, different, non-empty sets,
then there exists $k\in\{1,\ldots,m\}$ such that for every $1\leq
j\leq m$ with $j\ne k$, we have that $C_k\backslash C_j\ne
\varnothing$.
\end{lemma}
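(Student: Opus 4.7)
The plan is to identify $C_k$ as a $\subseteq$-maximal element of the finite family $\{C_1,\ldots,C_m\}$ and then use the distinctness hypothesis to rule out equality with any other $C_j$. That is, I aim to show that the condition $C_k\setminus C_j\ne\varnothing$ for all $j\ne k$ is nothing but the condition that $C_k$ be maximal under inclusion, once the sets are known to be pairwise different.

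First I would observe that any finite non-empty collection of sets has at least one element that is maximal with respect to set inclusion. Concretely, one can argue by a terminating ``chain-climb'': start with any $C_i$, and while there exists some $C_j$ in the family with $C_i\subsetneq C_j$, replace $C_i$ by $C_j$. Since the family has only $m$ elements, this process must stop after at most $m-1$ steps, and the final set $C_k$ satisfies: for every $j\ne k$, $C_k$ is not strictly contained in $C_j$.

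Next I would verify that this $C_k$ has the required property. Fix $j\in\{1,\ldots,m\}$ with $j\ne k$ and suppose, for contradiction, that $C_k\setminus C_j=\varnothing$; that is, $C_k\subseteq C_j$. By maximality, strict inclusion $C_k\subsetneq C_j$ is excluded, so necessarily $C_k=C_j$, which contradicts the hypothesis that $C_1,\ldots,C_m$ are pairwise different. Therefore $C_k\setminus C_j\ne\varnothing$ for every $j\ne k$.

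There is essentially no real obstacle; the lemma is a simple application of the existence of maximal elements in finite posets. The only point that deserves attention is the role of the distinctness assumption: without it, one could have $C_i=C_j$ for some $i\ne j$, and then the ``maximal'' element could coincide with another element of the family, so the strict non-containment $C_k\setminus C_j\ne\varnothing$ would fail. An equivalent induction on $m$ is available but adds no real content beyond the chain-climb argument above.
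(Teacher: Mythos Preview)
Your argument is correct: choosing $C_k$ maximal under inclusion in the finite family and invoking pairwise distinctness yields $C_k\setminus C_j\ne\varnothing$ for all $j\ne k$. The paper does not actually supply a proof of this lemma but merely cites it from another source, so there is nothing to compare; your chain-climb (equivalently, existence of maximal elements in finite posets) is the standard and expected justification.
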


Also, the next lemma (although of independent interest in itself) will be necessary.

\begin{lemma}\label{nbfunctions}
If $I\subset {\mathbb R} $ is uncountable, then the set $\NBF(I)$ is $2^{\card(I)}$-lineable.
\end{lemma}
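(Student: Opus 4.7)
The plan is to embed $\mathbb{K}^{\kappa}$ linearly into $\mathbb{K}^{I}$ in such a way that every nonzero element of the image is unbounded on $I$; here $\kappa:=\card(I)$ and $\mathbb{K}$ denotes the scalar field. Combined with a dimension count for $\mathbb{K}^{\kappa}$, this will produce a subspace of $\NBF(I)\cup\{0\}$ of dimension $2^{\kappa}$, which is the best possible since $\dim_{\mathbb{K}}\mathbb{K}^{I}=2^{\kappa}$ itself.

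First I would partition $I$. Since $\kappa$ is uncountable we have $\kappa\cdot\aleph_{0}=\kappa$, so $I$ decomposes as a disjoint union $I=\bigsqcup_{\alpha\in\Lambda}I_{\alpha}$ with $|\Lambda|=\kappa$ and each $I_{\alpha}$ countably infinite; enumerate $I_{\alpha}=\{x_{\alpha,n}:n\in\mathbb{N}\}$. Next define the linear map $T:\mathbb{K}^{\Lambda}\to\mathbb{K}^{I}$ by $T(f)(x_{\alpha,n})=n\,f(\alpha)$. This $T$ is injective: if $T(f)\equiv 0$ then in particular $f(\alpha)=T(f)(x_{\alpha,1})=0$ for every $\alpha\in\Lambda$. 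And for any $f\neq 0$, picking $\alpha_{0}$ with $f(\alpha_{0})\neq 0$ gives $|T(f)(x_{\alpha_{0},n})|=n\,|f(\alpha_{0})|\to\infty$, so $T(f)\in\NBF(I)$. Hence $T(\mathbb{K}^{\Lambda})\subset\NBF(I)\cup\{0\}$ is a subspace of $\mathbb{K}^{I}$ whose dimension equals $\dim_{\mathbb{K}}\mathbb{K}^{\Lambda}$.

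To close the argument I would invoke the Erd\H{o}s--Kaplansky theorem, which gives $\dim_{\mathbb{K}}\mathbb{K}^{\Lambda}=|\mathbb{K}|^{|\Lambda|}=\mathfrak{c}^{\kappa}=2^{\kappa}$, using that $\mathbb{K}\in\{\mathbb{R},\mathbb{C}\}$ has cardinality $\mathfrak{c}$ and $\kappa\geq\aleph_{0}$. In my view this dimension count is the main obstacle: the rest of the argument is entirely elementary once the multiplication-by-$n$ device is in hand, but the identification $\dim\mathbb{K}^{\kappa}=2^{\kappa}$ is genuinely nontrivial for an arbitrary uncountable cardinal $\kappa$, since one cannot simply appeal to ``dimension equals cardinality'' (it is consistent with ZFC that $2^{\aleph_{1}}=\mathfrak{c}=|\mathbb{K}|$, for instance). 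An alternative that avoids citing Erd\H{o}s--Kaplansky would be to hand-build an almost disjoint family $\{A_{\gamma}\}_{\gamma<2^{\kappa}}$ of subsets of $\Lambda$, each of cardinality $\kappa$ and with pairwise intersections of cardinality $<\kappa$, and to use their characteristic functions as the inputs to $T$; linear independence of any finite sub-family $\{\chi_{A_{\gamma_{1}}},\ldots,\chi_{A_{\gamma_{m}}}\}$ follows because each $A_{\gamma_{i}}$ contains $\kappa$-many points avoided by all the other $A_{\gamma_{j}}$, in the same spirit as Lemma \ref{sets}.
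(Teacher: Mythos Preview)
Your main argument is correct and takes a genuinely different route from the paper. The paper builds, for each nonempty $C\subset I$, an explicit function $H_C(x,x_1,x_2,\ldots)=x\prod_{j}\chi_C(x_j)$ on $\mathbb{R}\times I^{\mathbb N}$, shows via Lemma~\ref{sets} that the $H_C$ are linearly independent with every nontrivial linear combination unbounded, and then transports this family to $I$ through a bijection $\Phi:I\leftrightarrow\mathbb{R}\times I^{\mathbb N}$. So the paper exhibits $2^{\card(I)}$ linearly independent unbounded functions by hand, whereas you embed all of $\mathbb{K}^{\kappa}$ into $\NBF(I)\cup\{0\}$ via the multiplication-by-$n$ device and invoke Erd\H{o}s--Kaplansky for the dimension count. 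Your route is shorter and conceptually cleaner, at the price of citing a nontrivial external theorem; the paper's is more self-contained (modulo the elementary Lemma~\ref{sets}) but heavier on combinatorics. One incidental advantage of your approach: the paper's bijection $\Phi$ silently requires $|I|=|I|^{\aleph_0}\geq\mathfrak{c}$, which can fail (for example when $|I|=\aleph_1<\mathfrak{c}$, or when $|I|=\aleph_\omega$); your argument carries no such hidden hypothesis.

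A caution about your proposed alternative, however: the existence, for an arbitrary uncountable $\kappa$, of an almost disjoint family of size $2^{\kappa}$ on a set of size $\kappa$ (each member of size $\kappa$, pairwise intersections of size $<\kappa$) is not a ZFC theorem. The branches-of-${}^{<\kappa}2$ construction delivers such a family only when $2^{<\kappa}=\kappa$, and in general Tarski's theorem guarantees merely $\kappa^{+}$ almost disjoint sets; work of Baumgartner shows the maximum can consistently fall short of $2^{\kappa}$. So that alternative would need an extra cardinal-arithmetic assumption; your Erd\H{o}s--Kaplansky route is the one that goes through unconditionally.
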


\begin{proof}
For each non-void $C\subset I$ let $H_C\colon{\mathbb R}\times I^{\mathbb
N}\to {\mathbb R}$ be defined by
    $$
    H_C(x,x_1,\ldots,x_j,\ldots)=x\cdot \prod_{j=1}^\infty
    \chi_C(x_j).
    $$
If we fix a sequence $(x_n)\subset C$ then
$H_C(x,x_1,\ldots,x_n,\ldots)=x$ for all $x\in {\mathbb R}$, and
hence the $H_C$'s are not bounded. Moreover, if $C_1,\ldots,C_m$ are $m$
different subsets of $I$ and $\sum_{k=1}^m\lambda_k
H_{C_k}$ is a linear combination of the $H_{C_k}$'s ($1\leq k\leq m$) with $\lambda_k\ne 0$ for
all $k=1,\ldots,m$ then, renaming the sets if necessary, from Lemma \ref{sets} it follows that for each $1\leq j< m$ there exists
$x_j\in C_m\backslash C_j$. Now let
$v=(x,x_1,x_2,\ldots,x_{m-1},x_{m-1},\ldots)\in {\mathbb
R}\times I^{\mathbb N}$ with $x\in{\mathbb R}$ arbitrary. Then
    $$
    \sum_{k=1}^m\lambda_k
    H_{C_k}(v)=\sum_{k=1}^m\lambda_k\left[x\prod_{j=1}^{m-1}\chi_{C_k}(x_j)\right]=\lambda_mx,
    $$
for all $x\in{\mathbb R}$, which shows that $\sum_{k=1}^m\lambda_k H_{C_k}$ is not bounded.

Now if $\sum_{k=1}^m\lambda_k H_{C_k}\equiv 0$ and we set $v=(1,x_1,x_2,\ldots,x_{m-1},x_{m-1},\ldots)\in {\mathbb
R}\times I^{\mathbb N}$, then
        $$
    0=\sum_{k=1}^m\lambda_k
    H_{C_k}(v)=\sum_{k=1}^m\lambda_k\left[\prod_{j=1}^{m-1}\chi_{C_k}(x_j)\right]=\lambda_m,
    $$
which contradicts the fact that $\lambda_k\ne 0$ for all
$k=1,\ldots,m$. Finally, if $\Phi:I\leftrightarrow {\mathbb R}\times
I^{\mathbb N}$ is a bijection, then the set $\{H_C\circ
\Phi:C\subset I\}$ has unbounded non trivial linear combinations and it is linearly independent with cardinality
$2^{\card(I)}$, which concludes the proof.
\end{proof}

We are now ready to state and prove the main (and general) lineability result in this section:

\begin{theorem}
If $n\in{\mathbb N}$ and $E$ is a normed space of infinite dimension $\lambda$ then the sets $\NBL(^nE)$,  $\NBL^s(^nE)$,
$\NBP(^nE)$ and $\NBP_n(E)$ are $2^\lambda$-lineable.
\end{theorem}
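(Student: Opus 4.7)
My plan is to reduce the lineability of the four sets to the lineability of $\NBF$ on a set of cardinality $\lambda$, using a coordinate expansion in a normalized Hamel basis. Fix a Hamel basis $\{e_i\}_{i\in I}$ of $E$ with $|I|=\lambda$, and by rescaling assume $\|e_i\|=1$ for every $i$. The key observation is that any $n$-linear form $L\in\mathcal{L}_a(^nE)$ is determined by its values on basis tuples, so the assignment $f\mapsto L_f$ given by $L_f(e_{i_1},\ldots,e_{i_n})=f(i_1,\ldots,i_n)$ is a linear injection from $\mathbb{K}^{I^n}$ into $\mathcal{L}_a(^nE)$, which restricts to a linear injection of the $S_n$-invariant functions into $\mathcal{L}^s_a(^nE)$. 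Since each $e_i$ has norm $1$, whenever $f$ is unbounded on $I^n$ the form $L_f$ is automatically non-bounded on ${\mathsf B}_E^n$.

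With this reduction in hand, for $\NBL(^nE)$ I would apply Lemma \ref{nbfunctions} to an index set of cardinality $|I^n|=\lambda$ to produce a $2^\lambda$-dimensional subspace $V$ of $\NBF(I^n)\cup\{0\}$; then $\{L_f:f\in V\}$ is a $2^\lambda$-dimensional subspace of $\NBL(^nE)\cup\{0\}$. For $\NBL^s(^nE)$ I would repeat the argument using the set of unordered $n$-multisubsets of $I$ as the index set (still of cardinality $\lambda$ since $\lambda$ is infinite), so that the functions are $S_n$-invariant and produce genuinely symmetric forms under the extension.

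For the two polynomial statements I rely on the canonical linear bijection $L\mapsto\hat L$ between $\mathcal{L}^s_a(^nE)$ and $\mathcal{P}_a(^nE)$; by the standard polarization inequalities $\|\hat L\|\leq\|L\|\leq\frac{n^n}{n!}\|\hat L\|$, this bijection identifies bounded forms with bounded polynomials. Hence the subspace built for $\NBL^s(^nE)$ transfers to a $2^\lambda$-dimensional subspace of $\NBP(^nE)\cup\{0\}$; and because every $n$-homogeneous polynomial is a polynomial of degree at most $n$, that is $\mathcal{P}(^nE)\subset\mathcal{P}_n(E)$, the same subspace already witnesses $2^\lambda$-lineability of $\NBP_n(E)$.

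The main obstacle I foresee is supplying the required $2^\lambda$-dimensional subspace of $\NBF(J)$ uniformly in the infinite cardinal $\lambda$: Lemma \ref{nbfunctions} as proved uses a bijection $I\leftrightarrow\mathbb{R}\times I^{\mathbb{N}}$, which is only available when $|I|\geq\mathfrak{c}$. When $E$ has countable Hamel dimension one needs a separate construction of a $\mathfrak{c}$-dimensional space of unbounded scalar sequences, for instance by starting from an almost-disjoint family $\{A_t\}_{t\in T}\subset\mathcal{P}(\mathbb{N})$ with $|T|=\mathfrak{c}$ and forming the linearly independent sequences $n\mapsto n\,\chi_{A_t}(n)$, whose every nontrivial finite combination remains unbounded on the complement (in $A_{t_1}$) of finitely many other $A_{t_i}$.
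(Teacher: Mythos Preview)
Your argument is correct and runs parallel to the paper's, with one structural difference worth noting. You build $L_f$ from an unbounded function $f$ on the full tuple set $I^n$ (or on the multisets of $I$ for the symmetric case), whereas the paper works with unbounded functions on $I$ itself and sets
\[
L_j(e_{i_1},\ldots,e_{i_n}) = f_j(i_1)+\cdots+f_j(i_n).
\]
Because this expression is already symmetric in $(i_1,\ldots,i_n)$, the paper gets $\NBL(^nE)$ and $\NBL^s(^nE)$ from a single construction, testing unboundedness and linear independence on the diagonal tuples $(e_i,\ldots,e_i)$. Your version is a bit more modular (two separate index sets), but both routes reduce cleanly to Lemma~\ref{nbfunctions}, and your treatment of $\NBP(^nE)$ and $\NBP_n(E)$ via the polarization isomorphism and the inclusion $\NBP(^nE)\subset\NBP_n(E)$ is exactly what the paper does.

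Your final paragraph is on point: Lemma~\ref{nbfunctions} is stated for uncountable $I\subset\mathbb{R}$ and its proof uses a bijection $I\leftrightarrow\mathbb{R}\times I^{\mathbb{N}}$, so applying it verbatim when $\lambda=\aleph_0$ (or when $\lambda$ is not of the form $\lambda^{\aleph_0}$) is not immediate. This is a gap the paper glosses over as well, not one introduced by your approach. Your almost-disjoint-family construction $n\mapsto n\,\chi_{A_t}(n)$ does fix the countable case: for any nontrivial combination $\sum_k c_k\,n\,\chi_{A_{t_k}}(n)$, the set $A_{t_1}\setminus\bigcup_{k\geq 2}A_{t_k}$ is infinite, and on it the sum equals $c_1 n$, which is unbounded and forces $c_1\neq 0$ to give a nonzero function. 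So your patch is sound and in fact sharpens the paper's argument.
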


\begin{proof}
Let $\{e_i:i\in I\}$ be a basis for $E$ with $\card(I)=\lambda$ of norm $1$ vectors.
By Lemma \ref{nbfunctions} there exists $2^\lambda$ linearly independent mappings $\{f_j:j\in J\}$ (with $\card(J)=2^\lambda$) generating a  linear space of unbounded real valued functions on $I$. If
for each $j\in J$ we define a multilinear mapping
$L_j:E\rightarrow {\mathbb R}$ by
    \begin{equation}\label{Ljs}
    L_j(e_{i_1},\ldots,e_{i_n})=f_j(i_1)+\dotsb+f_j(i_n),
    \end{equation}
for all choices of $(i_1,\ldots,i_n)\in I^n$, then
$\{L_j:j\in J\}$ is a linearly independent set in $\NBL(^nE)$. Indeed if
$\sum_{k=1}^m\lambda_kL_{j_k}\equiv 0$ with
$\lambda_1,\ldots,\lambda_m\in{\mathbb R}$, then
for every $i\in I$ we have
    $$
    n\sum_{k=1}^m\lambda_kf_{j_k}(i)=\sum_{k=1}^m\lambda_kL_{j_k}(e_i,\stackrel{(n)}{\ldots},e_i)=0,
    $$
from which $\sum_{k=1}^m\lambda_kf_{j_k}(i)=0$ for every $i\in I$.
In other words $\sum_{k=1}^m\lambda_kf_{j_k}\equiv 0$ and
therefore $\lambda_k=0$ for all $1\leq k\leq m$ since the
$f_{j_k}$'s are linearly independent.

On the other hand, if $\lambda_k\ne 0$ for $k=1,\ldots,m$, then
    $$
    \left\|\sum_{k=1}^m\lambda_kL_{j_k}(e_i,\stackrel{(n)}{\ldots},e_i)\right\|=n\left|\sum_{k=1}^m\lambda_kf_{j_k}(i)\right|.
    $$
Hence $\sum_{k=1}^m\lambda_kL_{j_k}$ is not bounded since $\sum_{k=1}^m\lambda_kf_{j_k}$ is not bounded either.
This shows that $\NBL(^mE)$ is $2^\lambda$-lineable.

In order to prove that $\NBL^s(^nE)$ is $2^\lambda$-lineable, consider the set $\{\overline{L}_j:j\in J\}$, where the $L_j$'s are as in \eqref{Ljs} and $\overline{L}_j$ is the symmetrization of $L_j$ for all $j\in J$. If
$\sum_{k=1}^m\lambda_k\overline{L}_{j_k}\equiv 0$ with
$\lambda_1,\ldots,\lambda_m\in{\mathbb R}$, then
for every $i\in I$ we have
    $$
    n\left(\sum_{k=1}^m\lambda_kf_{j_k}(i)\right)=\sum_{k=1}^m\lambda_kL_{j_k}(e_i,\stackrel{(n)}{\ldots},e_i)=
    \sum_{k=1}^m\lambda_k\overline{L}_{j_k}(e_i,\stackrel{(n)}{\ldots},e_i)=0,
    $$
from which $\sum_{k=1}^m\lambda_kf_{j_k}(i)=0$ for every $i\in I$.
In other words $\sum_{k=1}^m\lambda_kf_{j_k}\equiv 0$ and
therefore $\lambda_k=0$ for all $1\leq k\leq m$ since the
$f_{j_k}$'s are linearly independent.

If now $\lambda_k\ne 0$ for all $k=1,\ldots m$, then
    $$
    \left\|\sum_{k=1}^m\lambda_k\overline{L}_{j_k}(e_i,\stackrel{(n)}{\ldots},e_i)\right\|=
    \left\|\sum_{k=1}^m\lambda_kL_{j_k}(e_i,\stackrel{(n)}{\ldots},e_i)\right\|=n\left|\sum_{k=1}^m\lambda_kf_{j_k}(i)\right|,
    $$
and since $\sum_{k=1}^m\lambda_kf_{j_k}$ is not bounded, then $\sum_{k=1}^m\lambda_k\overline{L}_{j_k}$ is not bounded either. Therefore $\NBL^s(^nE)$ is $2^\lambda$-lineable.

As a corollary to the fact that $\NBL^s(^nE)$ is $2^\lambda$-lineable, we deduce that $\NBP(^nE)$ is also $2^\lambda$-lineable since the algebraic spaces ${\mathcal L}^s_a(^nE)$ and  ${\mathcal P}_a(^nE)$ are isomorphic.

Finally, $\NBP_n(E)$ is $2^\lambda$-lineable since $\NBP(^nE)\subset \NBP_n(E)$ and $\NBP(^nE)$ is $2^\lambda$-lineable
\end{proof}

\noindent\textbf{Acknowledgements.} The authors express their gratitude to Prof. D. Azagra for fruitful conversations and for pointing out Proposition \ref{rem}.

\begin{bibdiv}
\begin{biblist}

\bib{ACPS}{article}{
   author={Aron, R. M.},
   author={Conejero, J. A.},
   author={Peris, A.},
   author={Seoane-Sep\'{u}lveda, J. B.},
   title={Uncountably generated algebras of everywhere surjective functions},
   journal={Bull. Belg. Math. Soc. Simon Stevin},
   volume={17},
   date={2010},
   pages={571--575},
}

\bib{AGPS}{article}{
   author={Aron, R. M.},
   author={Garc\'{\i}a-Pacheco, F. J.},
   author={P\'{e}rez-Garc\'{\i}a, D.},
   author={Seoane-Sep\'{u}lveda, J. B.},
   title={On dense-lineability of sets of functions on $\mathbb R$},
   journal={Topology},
   volume={48},
   date={2009},
   pages={149--156},
}

\bib{AronGurariySeoane}{article}{
   author={Aron, R. M.},
   author={Gurariy, V. I.},
   author={Seoane-Sep\'{u}lveda, J. B.},
   title={Lineability and spaceability of sets of functions on $\mathbb R$},
   journal={Proc. Amer. Math. Soc.},
   volume={133},
   date={2005},
   number={3},
   pages={795--803},
   issn={0002-9939},
}

\bib{APS}{article}{
    AUTHOR={Aron, R. M.},
    author={P\'{e}rez-Garc{\'{\i}}a, D.},
    author={Seoane-Sep\'{u}lveda, J. B.},
     TITLE = {Algebrability of the set of non-convergent {F}ourier series},
   JOURNAL = {Studia Math.},
    VOLUME = {175},
      YEAR = {2006},
    NUMBER = {1},
     PAGES = {83--90},
      ISSN = {0039-3223},
}

\bib{AS}{article}{
   author={Aron, R. M.},
   author={Seoane-Sep\'{u}lveda, J. B.},
   title={Algebrability of the set of everywhere surjective functions on
   $\mathbb C$},
   journal={Bull. Belg. Math. Soc. Simon Stevin},
   volume={14},
   date={2007},
   number={1},
   pages={25--31},
   issn={1370-1444},
}

\bib{Bernal}{article}{
    AUTHOR = {Bernal-Gonz\'{a}lez, L.},
     TITLE = {Dense-lineability in spaces of continuous functions},
   JOURNAL = {Proc. Amer. Math. Soc.},
    VOLUME = {136},
      YEAR = {2008},
    NUMBER = {9},
     PAGES = {3163--3169},
      ISSN = {0002-9939},
}

\bib{BDFP}{article}{
   author={Botelho, Geraldo},
   author={Diniz, Diogo},
   author={F\'{a}varo, V.},
   author={Pellegrino, Daniel},
   title={Spaceability in Banach and quasi-Banach sequence spaces},
   journal={Linear Algebra Appl.},
   status={To Appear (DOI 10.1016/j.laa.2010.11.012)},
}

\bib{BDP}{article}{
   author={Botelho, Geraldo},
   author={Diniz, Diogo},
   author={Pellegrino, Daniel},
   title={Lineability of the set of bounded linear non-absolutely summing
   operators},
   journal={J. Math. Anal. Appl.},
   volume={357},
   date={2009},
   number={1},
   pages={171--175},
   issn={0022-247X},
}

\bib{BMP}{article}{
     author={Botelho, G.},
     author={Matos, M.},
     author={Pellegrino, D.},
     title={Lineability of summing sets of homogeneous polynomials},
     journal={Linear Multilinear Algebra},
     volume={58},
     date={2010},
     number={1},
     pages={61--74},
}

\bib{Dineen}{book}{
   author={Dineen, S.},
   title={Complex analysis on infinite dimensional spaces},
   series={Springer Monographs in Mathematics},
   publisher={Springer-Verlag},
   place={Berlin},
   date={1999},
}

\bib{GMS}{article}{
author={G\'{a}mez-Merino, J. L.},
author={Mu\~{n}oz-Fern\'{a}ndez, G. A.},
author={Seoane-Sep\'{u}lveda, J. B.},
title={A characterization of continuity revisited},
journal={Amer. Math. Monthly},
VOLUME = {118},
YEAR = {2011},
NUMBER = {2},
PAGES = {167-170},
}

\bib{sierpinski}{article}{
author={G\'{a}mez-Merino, J. L.},
author={Mu\~{n}oz-Fern\'{a}ndez, G. A.},
author={Seoane-Sep\'{u}lveda, J. B.},
author={S\'{a}nchez, V.},
title={Sierpi\'nski-Zygmund functions and other problems on lineability},
journal={Proc. Amer. Math. Soc.},
VOLUME = {138},
YEAR = {2010},
PAGES = {3863-3876},
}

\bib{GGMS}{article}{
     author={Garc\'{\i}a, D.},
     author={Grecu, B.C.},
     author={Maestre, M.},
     author={Seoane-Sep\'{u}lveda, J. B.},
     title={Infinite dimensional Banach spaces of functions with nonlinear properties},
     journal={Math. Nachr.}
     VOLUME = {283},
     YEAR = {2010},
     NUMBER = {5},
     PAGES = {712--720},
}

\bib{MPPS}{article}{
    AUTHOR = {Mu\~{n}oz-Fern\'{a}ndez, G. A.},
    author = {Palmberg, N.},
    author = {Puglisi, D.},
    author = {Seoane-Sep\'{u}lveda, J. B.},
     TITLE = {Lineability in subsets of measure and function spaces},
   JOURNAL = {Linear Algebra Appl.},
    VOLUME = {428},
      YEAR = {2008},
    NUMBER = {11-12},
     PAGES = {2805--2812},
      ISSN = {0024-3795},
}

\bib{PS}{article}{
    AUTHOR = {Puglisi, D.},
    author = {Seoane-Sep\'{u}lveda, J. B.},
     TITLE = {Bounded linear non-absolutely summing operators},
   JOURNAL = {J. Math. Anal. Appl.},
    VOLUME = {338},
      YEAR = {2008},
    NUMBER = {1},
     PAGES = {292--298},
      ISSN = {0022-247X},
}

\bib{Velleman}{article}{
author={Velleman, D. J.},
title={Characterizing continuity},
journal={Amer. Math. Monthly},
volume={104},
date={1997},
pages={318--322},
}

\end{biblist}
\end{bibdiv}

\end{document}